\newtheorem{theorem}{Theorem}[section]
\newtheorem{corollary}[theorem]{Corollary}
\numberwithin{figure}{section}
\theoremstyle{definition}
\newtheorem{example}[theorem]{Example}
\theoremstyle{remark}
\newtheorem{remark}[theorem]{Remark}
\numberwithin{equation}{section}
	\DeclareMathOperator{\loc}{loc}
	\DeclareMathOperator*{\esssup}{ess\,sup}
\begin{document}

\title[On Conformal Spectral Gap Estimates of the Dirichlet-Laplacian]{On Conformal Spectral Gap Estimates of the Dirichlet-Laplacian}

\author{V.~Gol'dshtein, V.~Pchelintsev, A.~Ukhlov}

\begin{abstract}
We study spectral stability estimates of the Dirichlet eigenvalues of the Laplacian in non-convex domains $\Omega\subset\mathbb R^2$. With the help of these estimates we obtain asymptotically sharp inequalities of ratios of eigenvalues in the frameworks of the Payne-P\'olya-Weinberger inequalities. These estimates are equivalent to spectral gap estimates of the Dirichlet eigenvalues of the Laplacian in non-convex domains in terms of conformal (hyperbolic) geometry. 
\end{abstract}
\maketitle
\footnotetext{\textbf{Key words and phrases:} Elliptic equations, Sobolev spaces, conformal mappings.} 
\footnotetext{\textbf{2010
Mathematics Subject Classification:} 35P15, 46E35, 30C65.}

\section{Introduction}

The spectral gap problem for the two-dimensional Laplace operator 
$$
-\Delta u=-\left(\frac{\partial^2 u}{\partial x^2}+\frac{\partial^2 u}{\partial y^2}\right),\,\,(x,y)\in\Omega,
$$
arises in problems of continuum mechanics. In the present paper we obtain spectral gap estimates of the  Dirichlet eigenvalues of the Laplacian in a large class of non-convex domains $\Omega\subset\mathbb R^2$. This study is based on spectral stability estimates of the Laplace operator in so-called conformal regular domains \cite{BGU15}.

This notion of conformal regular domains was introduced in \cite{BGU15}. Recall that by Riemann's mapping theorem there exists a conformal mapping $\varphi:\mathbb D\to \Omega$ of a simply connected domain $\Omega \subset \mathbb R^2$ onto the unit disc $\mathbb D$. If $\varphi$ belongs to the Sobolev space $L^{1,\alpha}(\mathbb D)$ for some $\alpha >2$, then $\Omega$ is called a conformal $\alpha$-regular domain. Note that this definition does not depend on choice of a conformal mapping $\varphi:{\mathbb D}\to\Omega$ and can be reformulated in terms of the hyperbolic metrics \cite{BGU15}. 
It is known that any $C^2$-smooth simply connected bounded domain is $\infty$-regular (see, for example, \cite{Kr}). 

In the case of conformal $\alpha$-regular domains $\Omega,\widetilde{\Omega}\subset \mathbb R^2$ we introduce a new invariant that we call a conformal $\alpha$-variation:
\begin{equation}
\label{defvar}
V_{\alpha}^{0}(\Omega,\widetilde{\Omega})=\inf_{\varphi,\widetilde{\varphi}} \left[ \left(\|\varphi' \mid L^{\alpha}(\mathbb D)\| + \|\widetilde{\varphi}' \mid L^{\alpha}(\mathbb D)\|\right)
\|\varphi'-\widetilde{\varphi}' \mid L^{2}(\mathbb D)\| \right],
\end{equation}
where the infimum is taken over all conformal mappings $\varphi:\mathbb D\to\Omega$ and $\widetilde{\varphi}:\mathbb D\to\widetilde{\Omega}$.

By this definition $V_{\alpha}^{0}(\Omega,\widetilde{\Omega}) \to 0$ if $\|\varphi'-\widetilde{\varphi}' \mid L^{2}(\mathbb D)\|  \to 0$. In this sense it is an asymptotic invariant.

{\it This invariant was used in spectral stability estimates \cite{BGU15} but was not extracted from the right hand side of these estimates.}

The conformal $\alpha$-variation measures a "distance" between $\Omega$ and $\widetilde{\Omega}$ in terms of $L^2$-norms of conformal homeomorphisms. Using a notion of the conformal radius it can be proved that it depends on hyperbolic metrics only. In this paper the "conformal" version is more convenient. 
If $\Omega=\widetilde{\Omega}$ then $V_{\alpha}^{0}(\Omega,\widetilde{\Omega})=0$,  but in the case of different $\Omega$ and $\widetilde{\Omega}$ existence of the extremal conformal mappings is an open problem.

The suggested method is based on the spectral stability estimates of the Dirichlet-Laplace operator \cite{BGU15},  on the geometric theory of composition operators on Sobolev spaces \cite{U93,VU02} and its applications to the Sobolev type embedding theorems \cite{GG94,GU09}. 

Let $\Omega\subset\mathbb R^2$ be an open set. The Sobolev space $W^{1,p}(\Omega)$, $1\leq p<\infty$, (see, for example, \cite{M}) is defined 
as a Banach space of locally integrable weakly differentiable functions
$f:\Omega\to\mathbb{R}$ endowed with the following norm: 
\[
\|f\mid W^{1,p}(\Omega)\|=\biggr(\iint\limits _{\Omega}|f(x,y)|^{p}\, dxdy\biggr)^{\frac{1}{p}}+
\biggr(\iint\limits _{\Omega}|\nabla f(x,y)|^{p}\, dxdy\biggr)^{\frac{1}{p}}.
\]

The seminormed Sobolev space $L^{1,p}(\Omega)$, $1\leq p<\infty$, (see, for example, \cite{M}) is defined 
as a space of locally integrable weakly differentiable functions
$f:\Omega\to\mathbb{R}$ endowed with the following seminorm: 
\[
\|f\mid L^{1,p}(\Omega)\|=
\biggr(\iint\limits _{\Omega}|\nabla f(x,y)|^{p}\, dxdy\biggr)^{\frac{1}{p}}.
\]

The Sobolev space $W^{1,p}_{0}(\Omega)$, $1 \leq p< \infty$, is the closure in the $W^{1,p}(\Omega)$-norm of the 
space $C^{\infty}_{0}(\Omega)$ of all infinitely continuously differentiable functions with compact support in $\Omega$.

The proposed method permit us to obtain estimates of the spectral gap of the two-dimen\-si\-o\-nal Laplace operator in the terms of the conformal geometry of domains. Firstly we obtain estimates of the Poincar\'e constant in the Poincar\'e-Sobolev inequality for the critical case $p=n=2$. As an application we obtain the inverse Payne-P\'olya-Weinberger inequality in conformal regular domains.

These estimates can be precised for Ahlfors-type domains (i.e. quasidiscs) in terms of quasiconformal characteristics of domains.
Recall that $K$-quasidiscs are images of the unit discs under $K$-quasicon\-for\-mal homeomorphisms of the plane $\mathbb R^2$. 
The class of quasidiscs includes all Lipschitz simply connected domains but also includes some of fractal domains (for example, von Koch snowflake \cite{G82}, Rohde snowflakes \cite{Roh}). 
The Hausdorff dimension of the quasidisc's boundary can be any number in $[1,2)$.

\section{Estimates of eigenvalues of the Dirichlet-Laplacian}

Let $\Omega\subset \mathbb R^n$ be a bounded domain. Dirichlet eigenvalues $\lambda_k = \lambda_k(\Omega)$ of the Laplace operator are solutions of the following problem:
\begin{equation}\label{DirLap}
\Delta u+\lambda_k u =0\,\,\, \text{in $\Omega$}, \quad u=0\,\,\, \text{on $\partial \Omega$}.
\end{equation}

Payne, P\'olya and Weinberger \cite{PPW55, PPW56} proved that the ratio $\lambda_2(\Omega)/\lambda_1(\Omega)\leq 3$ for planar domains and conjectured that the ratio of the first two eigenvalues of the 
Dirichlet-Laplace operator in $\Omega \subset \mathbb R^2$ obtains the maximal upper bound in the disc $\mathbb D$: 
\begin{equation}\label{IneqPPW}
\frac{\lambda_2(\Omega)}{\lambda_1(\Omega)} \leq \frac{\lambda_2(\mathbb D)}{\lambda_1(\mathbb D)}.
\end{equation}

This upper bound $3$ was improved by Brands \cite{Br}, de Vries \cite{Vr} and Chiti \cite{Ch}. In \cite{AB91, AB92} Ashbaugh and Benguria proved this inequality \eqref{IneqPPW} in the case of space domains $\Omega\subset\mathbb R^n$, $n\geq 2$.

In the present section we discuss lower bounds of the ratio ${\lambda_2(\Omega)}/{\lambda_1(\Omega)}$, $\Omega\subset\mathbb R^2$. The main result gives an asymptotic sharp lower bound in the case of conformal regular domains $\Omega\subset\mathbb R^2$. As usual, lower bounds of this ratio give lower estimates of the spectral gap of the Dirichlet-Laplace operator that are important in problems of continuum mechanics.

\subsection{Refinement of the Dirichlet spectral stability estimates}

Now we refine the spectral stability theorem for conformal regular domains \cite{BGU15}.
Firstly we estimate the constant in the Poincar\'e--Sobolev inequality that appears in the spectral estimate \cite{BGU15}:

\begin{theorem}
\label{PoinConst}
Let $f \in W^{1,2}_0(\mathbb D)$. Then 
\begin{equation}\label{InPS}
\|f \mid L^{r}(\mathbb D)\| \leq A_{r,2}(\mathbb D) \|\nabla f \mid L^{2}(\mathbb D)\|, \,\,r \geq 2,
\end{equation}
where
\[
A_{r,2}(\mathbb D) \leq \inf\limits_{p\in \left(\frac{2r}{r+2},2\right)} 
\left(\frac{p-1}{2-p}\right)^{\frac{p-1}{p}}
\frac{\pi^{\frac{2-r}{2r}} 2^{-\frac{1}{p}}}{\sqrt{\Gamma(2/p) \Gamma(3-2/p)}}.
\]
\end{theorem}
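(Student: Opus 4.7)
The plan is to combine Talenti's sharp Sobolev inequality in $\mathbb{R}^2$ with two elementary applications of Hölder's inequality on the bounded disc $\mathbb{D}$. An auxiliary exponent $p\in(2r/(r+2),2)$ serves as a free parameter to be optimized at the end; its lower endpoint will be forced by one of the Hölder steps, and its upper endpoint is the natural restriction $p<n=2$ required in Talenti's theorem.

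Since $|\mathbb{D}|=\pi<\infty$, every $f\in W^{1,2}_{0}(\mathbb{D})$ automatically lies in $W^{1,p}_{0}(\mathbb{D})$ for each $p\in(1,2)$, and its extension by zero belongs to $W^{1,p}_{0}(\mathbb{R}^2)$. For such $f$, Talenti's sharp Sobolev inequality reads
$$
\|f\mid L^{p^{*}}(\mathbb{R}^2)\|\leq C(p)\,\|\nabla f\mid L^{p}(\mathbb{R}^2)\|,\qquad p^{*}=\frac{2p}{2-p},
$$
with the explicit sharp constant in dimension two
$$
C(p)=\pi^{-1/2}\cdot 2^{-1/p}\left(\frac{p-1}{2-p}\right)^{(p-1)/p}\bigl(\Gamma(2/p)\,\Gamma(3-2/p)\bigr)^{-1/2}.
$$
This is the analytic core of the argument; what remains is bookkeeping.

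To pass from $L^{p^{*}}(\mathbb{D})$ down to $L^{r}(\mathbb{D})$ I would impose $p^{*}\geq r$, equivalent to $p\geq 2r/(r+2)$; Hölder on the bounded domain then gives $\|f\mid L^{r}(\mathbb{D})\|\leq\pi^{\,1/r-1/p^{*}}\|f\mid L^{p^{*}}(\mathbb{D})\|$. Similarly, since $p<2$, Hölder on the gradient side yields $\|\nabla f\mid L^{p}(\mathbb{D})\|\leq\pi^{\,1/p-1/2}\|\nabla f\mid L^{2}(\mathbb{D})\|$. Chaining the three inequalities and using $1/p^{*}=1/p-1/2$, the Hölder exponents of $\pi$ telescope to $1/r$, which together with the intrinsic $\pi^{-1/2}$ inside $C(p)$ produces exactly $\pi^{(2-r)/(2r)}$; taking the infimum over $p\in(2r/(r+2),2)$ then yields the claimed estimate. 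There is no substantive obstacle here: the whole content lies in choosing $p$ so that both Hölder reductions are admissible while simultaneously accessing the explicit two-dimensional Talenti constant. Since every admissible $p$ already furnishes an upper bound on $A_{r,2}(\mathbb{D})$, passing to the infimum is automatic and no attainment argument is needed.
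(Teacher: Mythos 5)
Your proposal is correct and follows essentially the same route as the paper: both arguments extend $f$ by zero, invoke Talenti's sharp two-dimensional Sobolev constant for an auxiliary exponent $p\in(2r/(r+2),2)$, apply H\"older once on the gradient side and once on the function side over the bounded disc, observe that the powers of $\pi$ telescope to $\pi^{(2-r)/(2r)}$, and take the infimum over admissible $p$. No gaps.
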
 

\begin{proof} 
We estimate the constant $A_{r,2}^2(\mathbb D)$ using the Talenti estimate \cite{Tal76}
\[
\|f \mid L^q(\mathbb R^n)\|\leq A_{p,q}(\mathbb R^n) \|\nabla f \mid L^p(\mathbb R^n)\|,\,\,q=\frac{np}{n-p},
\]
where 
\begin{equation*}\label{EsTal}
A_{p,q}(\mathbb R^n)= \frac{1}{\sqrt{\pi}\cdot \sqrt[p]{n}} \left(\frac{p-1}{n-p}\right)^{\frac{p-1}{p}}
\left(\frac{\Gamma(1+n/2) \Gamma(n)}{\Gamma(n/p) \Gamma(1+n-n/p)}\right)^{\frac{1}{n}}.
\end{equation*}

The Talenti estimate can not be applied directly for $p=n=2$. Choose some number $p: 2r/(2+r)<p<2$. By the H\"older inequality with exponents $(2/(2-p), 2/p)$ we have
\begin{multline*}
\biggr(\iint\limits _{\mathbb D}|\nabla f(x,y)|^{p}\, dxdy\biggr)^{\frac{1}{p}} \leq
\biggr(\iint\limits _{\mathbb D} dxdy\biggr)^{\frac{2-p}{2p}} \biggr(\iint\limits _{\mathbb D}|\nabla f(x,y)|^{2}\, dxdy\biggr)^{\frac{1}{2}}\\ = 
\pi^{\frac{2-p}{2p}}\biggr(\iint\limits _{\mathbb D}|\nabla f(x,y)|^{2}\, dxdy\biggr)^{\frac{1}{2}}.
\end{multline*} 
Because any function $f\in W^{1,p}_0(\mathbb D)$ can be extended by zero to $\widetilde{f}\in W^{1,p}_0(\mathbb R^n)$, it permit us to apply the Talenti estimate:
\begin{multline*}
\biggr(\iint\limits _{\mathbb D}|f(x,y)|^{q}\, dxdy\biggr)^{\frac{1}{q}}=\biggr(\iint\limits _{\mathbb R^2}|\widetilde{f}(x,y)|^{q}\, dxdy\biggr)^{\frac{1}{q}}\\
 \leq A_{p,q}(\mathbb R^2)\biggr(\iint\limits _{\mathbb R^2}|\nabla \widetilde{f}(x,y)|^{p}\, dxdy\biggr)^{\frac{1}{p}}=
A_{p,q}(\mathbb R^2)\biggr(\iint\limits _{\mathbb D}|\nabla f(x,y)|^{p}\, dxdy\biggr)^{\frac{1}{p}},
\end{multline*}
where
\[
A_{p,q}(\mathbb R^2) =  
\frac{1}{\sqrt{\pi}\cdot \sqrt[p]{2}} \left(\frac{p-1}{2-p}\right)^{\frac{p-1}{p}}
\frac{1}{\sqrt{\Gamma(2/p) \Gamma(3-2/p)}}.
\]

Taking into account the H\"older inequality with exponents $(q/(q-r), q/r)$ we get
\begin{multline*}
\biggr(\iint\limits _{\mathbb D}|f(x,y)|^{r}\, dxdy\biggr)^{\frac{1}{r}} \leq 
\biggr(\iint\limits _{\mathbb D} dxdy\biggr)^{\frac{q-r}{qr}} \biggr(\iint\limits _{\mathbb D}|f(x,y)|^{q}\, dxdy\biggr)^{\frac{1}{q}} \\=
\pi^{\frac{q-r}{qr}}\biggr(\iint\limits _{\mathbb D}|f(x,y)|^{q}\, dxdy\biggr)^{\frac{1}{q}}
\leq\pi^{\frac{q-r}{qr}}A_{p,q}(\mathbb R^2)
\biggr(\iint\limits _{\mathbb D}|\nabla f(x,y)|^{p}\, dxdy\biggr)^{\frac{1}{p}}
\\
\leq \pi^{\frac{q-r}{qr}}\pi^{\frac{2-p}{2p}}A_{p,q}(\mathbb R^2)
\biggr(\iint\limits _{\mathbb D}|\nabla f(x,y)|^{2}\, dxdy\biggr)^{\frac{1}{2}}.
\end{multline*}

Since the last inequality holds for any $p\in (2r/(2+r), 2)$ and $q=2p/(2-p)$ we obtain that
\[
\biggr(\iint\limits _{\mathbb D}|f(x,y)|^{r}\, dxdy\biggr)^{\frac{1}{r}} \leq A_{r,2}(\mathbb D)
\biggr(\iint\limits _{\mathbb D}|\nabla f(x,y)|^{2}\, dxdy\biggr)^{\frac{1}{2}},
\]
where
\[
A_{r,2}(\mathbb D) \leq \inf\limits_{p\in \left(\frac{2r}{r+2},2\right)} 
\left(\frac{p-1}{2-p}\right)^{\frac{p-1}{p}}
\frac{\pi^{\frac{2-r}{2r}} 2^{-\frac{1}{p}}}{\sqrt{\Gamma(2/p) \Gamma(3-2/p)}}.
\]

\end{proof}

This estimate of the constant $A_{r,2}$ allows us to refine spectral stability estimates of the work \cite{BGU15}.

\begin{theorem}\label{BGU}
Let $\Omega$ and $\widetilde{\Omega}$ be conformal $\alpha$-regular domains for some $\alpha \in (2, \infty]$.
Then for any $k \in \mathbb N$
\begin{equation*}\label{InBGU}
|\lambda_k(\Omega)-\lambda_k(\widetilde{\Omega})|
\leq \max \left\{\lambda_k^2(\Omega),\lambda_k^2(\widetilde{\Omega})\right\} A_{r,2}^2(\mathbb D)V_{\alpha}^{0}(\Omega,\widetilde{\Omega}),
\end{equation*}
where 
$$
A_{r,2}^2(\mathbb D) \leq \gamma_{\alpha} = \inf\limits_{p\in \left(\frac{4 \alpha}{3\alpha -2},2\right)} 
\left(\frac{p-1}{2-p}\right)^{\frac{2(p-1)}{p}}
\frac{\pi^{-\frac{\alpha +2}{2\alpha}} 4^{-\frac{1}{p}}}{\Gamma(2/p) \Gamma(3-2/p)}, \quad  r=\frac{4 \alpha}{\alpha -2}
$$
is the exact constant in inequality \eqref{InPS}.
\end{theorem}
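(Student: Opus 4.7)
The plan is to combine two ingredients that are essentially already in hand: the spectral stability estimate of \cite{BGU15} for conformal $\alpha$-regular domains, and the refined bound for the Poincar\'e-Sobolev constant $A_{r,2}(\mathbb{D})$ given by Theorem~\ref{PoinConst}. The theorem is really a repackaging of the \cite{BGU15} inequality with a new, explicit estimate for the Sobolev constant it contains.

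First I would recall the spectral stability statement from \cite{BGU15}. Its proof goes through a conformal pull-back of the Dirichlet eigenfunctions of $\Omega$ and $\widetilde\Omega$ to the unit disc, after which the relevant integrand is the difference of Jacobians $|\varphi'|^2 - |\widetilde\varphi'|^2$. Factoring this as $(\varphi' - \widetilde\varphi')(\varphi' + \widetilde\varphi')$ and applying H\"older's inequality so as to pair the $L^\alpha$-norms of $\varphi'$ and $\widetilde\varphi'$ (finite by $\alpha$-regularity) against $\|\varphi' - \widetilde\varphi'\|_{L^2(\mathbb D)}$ forces the companion exponent on the pulled-back test functions to satisfy $1/\alpha + 1/2 + 2/r = 1$, i.e.\ $r = 4\alpha/(\alpha-2)$. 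The outcome is exactly
\[
|\lambda_k(\Omega) - \lambda_k(\widetilde\Omega)| \leq \max\{\lambda_k^2(\Omega), \lambda_k^2(\widetilde\Omega)\}\, A_{r,2}^2(\mathbb{D})\, V_\alpha^0(\Omega, \widetilde\Omega),
\]
which is the spectral part of the claim.

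The remaining task is to insert the bound from Theorem~\ref{PoinConst}. With $r = 4\alpha/(\alpha-2)$ one verifies by direct substitution that the admissible range $p \in (2r/(r+2), 2)$ in Theorem~\ref{PoinConst} becomes $p \in (4\alpha/(3\alpha-2), 2)$; that $2\cdot(2-r)/(2r) = -(\alpha+2)/(2\alpha)$, producing $\pi^{-(\alpha+2)/(2\alpha)}$; that $2^{-1/p}$ squares to $4^{-1/p}$; and that the $\Gamma$-terms and the $(p-1)/(2-p)$-factor simply pick up squared exponents. Taking the square of Theorem~\ref{PoinConst} termwise reproduces $\gamma_\alpha$ exactly.

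The main obstacle is the first step: tracking through the \cite{BGU15} derivation carefully enough to confirm both that it is the square $A_{r,2}^2(\mathbb{D})$ (rather than the first power) that enters the stability estimate, and that the exponent is precisely $r = 4\alpha/(\alpha-2)$ with the H\"older pairing described above. Once this bookkeeping of the composition-operator argument is settled, the refinement is purely an algebraic substitution into Theorem~\ref{PoinConst}.
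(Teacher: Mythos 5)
Your proposal is correct and follows essentially the same route as the paper: the paper likewise treats the inequality of \cite{BGU15} as the given input, passes to the infimum over conformal mappings to produce $V_{\alpha}^{0}(\Omega,\widetilde{\Omega})$, and then substitutes the bound of Theorem~\ref{PoinConst} with $r=4\alpha/(\alpha-2)$, whose algebraic reduction to $\gamma_\alpha$ you have verified accurately. The only difference is that you additionally sketch the internal H\"older-pairing mechanism behind the \cite{BGU15} estimate, which the paper simply cites as a black box.
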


\begin{proof} 
Because $\Omega$ and $\widetilde{\Omega}$ are conformal $\alpha$-regular domains there exist conformal mappings $\varphi: \mathbb D \to \Omega$ and $\widetilde{\varphi}: \mathbb D \to \widetilde{\Omega}$ such that $|\varphi'|, |\widetilde{\varphi}'| \in L^{\alpha}(\mathbb D)$.
In \cite{BGU15} was proved that
\begin{multline*}
|\lambda_k(\Omega)-\lambda_k(\widetilde{\Omega})|\\
\leq 
\max \left\{\lambda_k^2(\Omega),\lambda_k^2(\widetilde{\Omega})\right\} A_{r,2}^2(\mathbb D) \left(\|\varphi' \mid L^{\alpha}(\mathbb D)\| + \|\widetilde{\varphi}' \mid L^{\alpha}(\mathbb D)\|\right)
\||\varphi'|-|\widetilde{\varphi}'| \mid L^{2}(\mathbb D)\|.
\end{multline*}
Since this inequality holds for any $\varphi: \mathbb D \to \Omega$, $\widetilde{\varphi}: \mathbb D \to \widetilde{\Omega}$ we obtain
\begin{multline*}\label{InBGU2}
|\lambda_k(\Omega)-\lambda_k(\widetilde{\Omega})|\leq 
\max \left\{\lambda_k^2(\Omega),\lambda_k^2(\widetilde{\Omega})\right\} A_{r,2}^2(\mathbb D)
\times \\ \times
\inf\limits_{\varphi\in L^{1,2}(\mathbb D),\widetilde{\varphi}\in L^{1,2}(\mathbb D)}
\left(\|\varphi' \mid L^{\alpha}(\mathbb D)\| + \|\widetilde{\varphi}' \mid L^{\alpha}(\mathbb D)\|\right)
\||\varphi'|-|\widetilde{\varphi}'| \mid L^{2}(\mathbb D)\|\\
= \max \left\{\lambda_k^2(\Omega),\lambda_k^2(\widetilde{\Omega})\right\} A_{r,2}^2(\mathbb D)V_{\alpha}^{0}(\Omega,\widetilde{\Omega}).
\end{multline*}

Now using Theorem~\ref{PoinConst} for estimate of the constant $A_{r,2}(\mathbb D)$ and taking $r=4 \alpha /(\alpha -2)$ we have 
$$
A_{r,2}^2(\mathbb D) \leq \gamma_{\alpha} = \inf\limits_{p\in \left(\frac{4 \alpha}{3\alpha -2},2\right)} 
\left(\frac{p-1}{2-p}\right)^{\frac{2(p-1)}{p}}
\frac{\pi^{-\frac{\alpha +2}{2\alpha}} 4^{-\frac{1}{p}}}{\Gamma(2/p) \Gamma(3-2/p)}, \quad  r=\frac{4 \alpha}{\alpha -2}.
$$

\end{proof}

Combining Theorem \ref{BGU} and some classical results of the spectral theory of elliptic operators we get upper estimates for the first eigenvalue and 
lower estimates for the second eigenvalue of the Dirichlet-Laplace operator in conformal regular domains:
\begin{theorem}\label{UDE}
Let $\Omega \subset \mathbb R^2$ be a conformal $\alpha$-regular domain of area $\pi$.
Then the following inequalities hold
\begin{equation*}\label{UpEst}
\lambda_1(\Omega) \leq 
\lambda_1(\mathbb{D})+\lambda_1^2(\mathbb D_{\rho}) A_{r,2}^2(\mathbb D) V_{\alpha}^{0}(\mathbb D,\Omega),
\end{equation*}
\begin{equation*}\label{LowEst}
\lambda_2(\Omega) \geq
\lambda_2(\mathbb D) - \lambda_{*}^2 \cdot \lambda_1^2(\mathbb D_{\rho}) A_{r,2}^2(\mathbb D) V_{\alpha}^{0}(\mathbb D,\Omega),
\end{equation*}
where $\lambda_{*}=\frac{\lambda_2(\mathbb D)}{\lambda_1(\mathbb D)} \approx 2.539$ and $\mathbb D_{\rho}$ is the largest disc inscribed in $\Omega$.
\end{theorem}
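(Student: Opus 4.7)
The plan is to apply Theorem~\ref{BGU} with $\widetilde{\Omega}=\mathbb D$, and then replace the \emph{a priori} unknown quantity $\max\{\lambda_k^2(\Omega),\lambda_k^2(\mathbb D)\}$ on the right-hand side by the explicit geometric quantity $\lambda_*^{2(k-1)}\lambda_1^2(\mathbb D_\rho)$. The replacement uses three standard inputs from spectral theory: the Faber--Krahn inequality, the domain monotonicity of Dirichlet eigenvalues under inclusion, and the sharp Payne--P\'olya--Weinberger ratio~\eqref{IneqPPW} of Ashbaugh--Benguria.

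First I would handle $k=1$. Since $|\Omega|=\pi=|\mathbb D|$, Faber--Krahn gives $\lambda_1(\mathbb D)\le\lambda_1(\Omega)$. On the other hand, the inclusion $\mathbb D_\rho\subset\Omega$ together with monotonicity of Dirichlet eigenvalues under domain enlargement gives $\lambda_1(\Omega)\le\lambda_1(\mathbb D_\rho)$. Combining the two, $\max\{\lambda_1^2(\Omega),\lambda_1^2(\mathbb D)\}\le\lambda_1^2(\mathbb D_\rho)$, and Theorem~\ref{BGU} with $k=1$ immediately yields the desired upper bound on $\lambda_1(\Omega)$.

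Next I would treat $k=2$. By~\eqref{IneqPPW} applied to $\Omega$, $\lambda_2(\Omega)\le\lambda_*\,\lambda_1(\Omega)$, while equality holds on the disc so that $\lambda_2(\mathbb D)=\lambda_*\,\lambda_1(\mathbb D)$. Using once again Faber--Krahn and the inscribed-disc monotonicity as in the previous paragraph, both $\lambda_1(\Omega)$ and $\lambda_1(\mathbb D)$ are bounded above by $\lambda_1(\mathbb D_\rho)$; hence $\max\{\lambda_2^2(\Omega),\lambda_2^2(\mathbb D)\}\le\lambda_*^2\,\lambda_1^2(\mathbb D_\rho)$. Inserting this into Theorem~\ref{BGU} and dropping to the one-sided bound $\lambda_2(\mathbb D)-\lambda_2(\Omega)\le|\lambda_2(\Omega)-\lambda_2(\mathbb D)|$ gives the stated lower bound on $\lambda_2(\Omega)$.

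There is no real obstacle once Theorem~\ref{BGU} is available: the whole content of the argument is to convert the non-explicit factor $\max\{\lambda_k^2(\Omega),\lambda_k^2(\mathbb D)\}$ into an expression that depends only on the geometry of $\Omega$ (through the inradius disc $\mathbb D_\rho$) and on the universal constant $\lambda_*$. The only small care needed is consistency with the normalization $|\Omega|=\pi$, which is exactly what lets Faber--Krahn be invoked with the reference value $\lambda_1(\mathbb D)$.
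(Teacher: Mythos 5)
Your proposal is correct and follows essentially the same route as the paper: apply Theorem~\ref{BGU} with $\widetilde{\Omega}=\mathbb D$, then bound $\max\{\lambda_k^2(\Omega),\lambda_k^2(\mathbb D)\}$ by $\lambda_*^{2(k-1)}\lambda_1^2(\mathbb D_\rho)$ using Faber--Krahn, domain monotonicity for the inscribed disc, and the Payne--P\'olya--Weinberger ratio. The only cosmetic difference is that the paper first identifies the maximum as $\lambda_1^2(\Omega)$ (resp.\ $\lambda_*^2\lambda_1^2(\Omega)$) before passing to $\lambda_1^2(\mathbb D_\rho)$, whereas you bound both entries of the maximum directly; the estimates obtained are identical.
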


\begin{proof}
Since $\Omega$ is a conformal $\alpha$-regular domain, by Theorem \ref{BGU} in case $k=1, 2$ and $\widetilde{\Omega}=\mathbb D$, we have the following estimates:
\begin{equation}\label{Eq1}
\lambda_1(\Omega) \leq \lambda_1(\mathbb D) 
+\max \left\{\lambda_1^2(\Omega), \lambda_1^2(\mathbb D)\right\} A_{r,2}^2(\mathbb D) V_{\alpha}^{0}(\mathbb D,\Omega).
\end{equation}
\begin{equation}\label{Equ1}
\lambda_2(\Omega) \geq \lambda_2(\mathbb D)-\max \left\{\lambda_2^2(\Omega), \lambda_2^2(\mathbb D)\right\}
A_{r,2}^2(\mathbb D) V_{\alpha}^{0}(\mathbb D,\Omega).
\end{equation}

Further we indicate maximum between $\lambda_1(\Omega)$ and $\lambda_1(\mathbb D)$ as well as between $\lambda_2(\Omega)$ and $\lambda_2(\mathbb D)$.

According to the Rayleigh-Faber-Krahn inequality \cite{F23, Kh25}, which states that the disc minimizes the first
Dirichlet eigenvalue among all planar domains of the same area, i.e.
\[
\lambda_1(\Omega) \geq \lambda_1(\mathbb D) = j_{0,1}^2,
\]
we obtain
\begin{equation*}
\max \left\{\lambda_1^2(\Omega), \lambda_1^2(\mathbb D)\right\} = \lambda_1^2(\Omega).
\end{equation*}
Here $j_{0,1} \approx 2.4048$ is the first positive zero of the Bessel function $J_0$.

In turn, the property of domain monotonicity for the Dirichlet eigenvalues (see, for example, \cite{GN13}) implies the following estimate
\begin{equation}\label{Eq2}
\lambda_1(\Omega) \leq \lambda_1(\mathbb D_{\rho}) = \frac{j_{0,1}^2}{\rho^2}.
\end{equation}
where $\mathbb D_{\rho}$ is the largest ball inscribed in $\Omega$ and $\rho$ is its radius.

Now we determine maximum between $\lambda_2(\Omega)$ and $\lambda_2(\mathbb D)$. For this aim we use the Payne-P\'olya-Weinberger inequality
\[
\frac{\lambda_2(\Omega)}{\lambda_1(\Omega)} \leq \frac{\lambda_2(\mathbb D)}{\lambda_1(\mathbb D)} = \lambda_{*} \approx 2.539.
\] 
Using that $\lambda_1(\Omega)/\lambda_1(\mathbb D) \geq 1$ for $|\Omega|=\pi$ and taking into account estimate \eqref{Eq2},  
straightforward calculations yield 
\begin{multline}\label{Equ2}
\max \left\{\lambda_2^2(\Omega), \lambda_2^2(\mathbb D)\right\} 
\leq \max \left\{\frac{\lambda_2^2(\mathbb {D})}{\lambda_1^2(\mathbb {D})} \lambda_1^2(\Omega), \lambda_2^2(\mathbb D)\right\} \\
= \frac{\lambda_2^2(\mathbb {D})}{\lambda_1^2(\mathbb {D})} \lambda_1^2(\Omega) 
\leq \frac{\lambda_2^2(\mathbb {D})}{\lambda_1^2(\mathbb {D})} \lambda_1^2(\mathbb D_{\rho})
= \lambda_{*}^2 \cdot \lambda_1^2(\mathbb D_{\rho}).
\end{multline}

Finally, combining inequalities \eqref{Eq2}, \eqref{Eq1} and   
inequalities \eqref{Equ2}, \eqref{Equ1} we obtain the required result. 
\end{proof}

\begin{remark}
In Theorem \ref{UDE}, instead of the suggested upper estimate for the first Dirichlet eigenvalue, one can use  
the well-known upper estimate for the first eigenvalue of the Dirichlet-Laplacian 
in simply connected planar domains received by Payne and Weinberger \cite{PW}: 
\[
\lambda_1(\Omega) \leq \frac{\pi j^2_{0,1}}{|\Omega|} \left[1+\left(\frac{1}{J_1^2(j_{0,1})}-1\right) \left(\frac{|\partial \Omega|^2}{4 \pi |\Omega|}-1\right)\right],
\]
where $|\Omega|$ is the Lebesgue measure of $\Omega$, $|\partial \Omega|$ is the Hausdorff measure of the boundary of $\Omega$ and  
$J_1$ denotes the Bessel function of the first kind of order one with $J_1(j_{0,1})$. 
This assertion is optimal in the sense that the equality holds if and only if $\Omega$ is a disc.
\end{remark}

For example, if $\Omega$ is bounded by von Koch snowflake, then it is known that $|\partial \Omega| =\infty$ and $|\Omega|<\infty$.
In this case, the upper estimates of Payne-Weinberger for the first of the Dirichlet eigenvalue tends to infinity.
Conversely, in \cite{GPU17_2} was shown that von Koch snowflake is a conformal $\alpha$-regular domain.

\subsection{Inverse Payne-P\'olya-Weinberger Inequality}

In \cite{PPW55} Payne, P\'olya and Weinberger studied estimates of ratios of Dirichlet eigenvalues.
In two-dimensional case, they proved that the ratio $\lambda_2(\Omega)/\lambda_1(\Omega)$ is bounded by $3$ and  
conjectured that among all planar Euclidean domains, the disc
maximizes the ratio $\lambda_2(\Omega)/\lambda_1(\Omega)$ of first and second Dirichlet eigenvalues.
This conjecture was proved by Ashbaugh and Benguria \cite{AB91}. In \cite{AB92} they
 also extended the their results to the higher-dimensional case and  
to the hemisphere in $S^n$ \cite{AB01}. 

In this paper we give asymptotically exact lower estimates of the Payne-P\'olya-Weinberger ratio of eigenvalues of the Dirichlet-Laplace operator
in conformal $\alpha$-regular domains using the asymptotic invariant $V_{\alpha}^{0}(\Omega,\widetilde{\Omega})$.  
Recall that $V_{\alpha}^{0}(\mathbb D,\Omega) \to 0$ if $\| 1-\varphi' \mid L^{2}(\mathbb D)\| \to 0$.

Thus, taking into account Theorem \ref{UDE} and performing straightforward calculations we obtain the main result of the paper.

\begin{theorem}
\label{PPW}
Let $\Omega \subset \mathbb R^2$ be a conformal $\alpha$-regular domain of area $\pi$.
Then the ratio of the first two eigenvalues of the Dirichlet-Laplacian satisfies 
\[
\frac{\lambda_2(\Omega)}{\lambda_1(\Omega)} \geq
\frac{\lambda_2(\mathbb D) - \lambda_{*}^2 \lambda_1^2(\mathbb D_{\rho}) \gamma_{\alpha} V_{\alpha}^{0}(\mathbb D,\Omega)}
{\lambda_1(\mathbb{D})+\lambda_1^2(\mathbb D_{\rho}) \gamma_{\alpha} V_{\alpha}^{0}(\mathbb D,\Omega)},
\]
where $\lambda_{*}=\frac{\lambda_2(\mathbb D)}{\lambda_1(\mathbb D)} \approx 2.539$, $\mathbb D_{\rho}$ is the largest disc inscribed in $\Omega$ and 
\[ 
\gamma_{\alpha} = \inf\limits_{p\in \left(\frac{4 \alpha}{3\alpha -2},2\right)} 
\left(\frac{p-1}{2-p}\right)^{\frac{2(p-1)}{p}}
\frac{\pi^{-\frac{\alpha +2}{2\alpha}} 4^{-\frac{1}{p}}}{\Gamma(2/p) \Gamma(3-2/p)}\,.
\]
\end{theorem}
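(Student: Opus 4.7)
The plan is to derive the statement as a direct consequence of Theorem \ref{UDE}, combined with the bound $A_{r,2}^2(\mathbb D) \leq \gamma_{\alpha}$ from Theorem \ref{BGU}. Since no new analytic machinery is needed, the entire proof is arithmetic manipulation of the two one-sided bounds already established.

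First I would take the two inequalities furnished by Theorem \ref{UDE}, namely
\[
\lambda_1(\Omega) \leq \lambda_1(\mathbb{D})+\lambda_1^2(\mathbb D_{\rho}) A_{r,2}^2(\mathbb D) V_{\alpha}^{0}(\mathbb D,\Omega),
\]
\[
\lambda_2(\Omega) \geq \lambda_2(\mathbb D) - \lambda_{*}^2 \lambda_1^2(\mathbb D_{\rho}) A_{r,2}^2(\mathbb D) V_{\alpha}^{0}(\mathbb D,\Omega),
\]
and substitute the a priori upper bound $A_{r,2}^2(\mathbb D)\le \gamma_\alpha$ from Theorem \ref{BGU}. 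Enlarging the constant on the right of the first estimate makes it weaker but still valid, and enlarging it on the negative side of the second estimate likewise weakens but preserves that bound. Thus both bounds remain true after replacing $A_{r,2}^2(\mathbb D)$ by $\gamma_\alpha$.

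Second, I would form the ratio of the resulting lower bound for $\lambda_2(\Omega)$ and upper bound for $\lambda_1(\Omega)$. The denominator is manifestly positive, so the only point to check is that the numerator is non-negative, i.e.\ that $V_{\alpha}^{0}(\mathbb D,\Omega)$ is small enough so that
\[
\lambda_2(\mathbb D) - \lambda_{*}^2 \lambda_1^2(\mathbb D_{\rho}) \gamma_{\alpha} V_{\alpha}^{0}(\mathbb D,\Omega) \geq 0.
\]
Under this condition, dividing preserves the inequality direction and yields exactly the claimed estimate. In the opposite regime the stated inequality is trivially satisfied, since $\lambda_2(\Omega)/\lambda_1(\Omega) > 0$ while the right-hand side is non-positive.

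There is no genuine obstacle: the argument is essentially a single algebraic step applied to Theorem \ref{UDE}. The main technical subtlety I would flag is the sign bookkeeping when substituting $A_{r,2}^2(\mathbb D)\le\gamma_\alpha$ into a lower bound, where one must be careful that enlarging the subtracted constant weakens, rather than strengthens, the inequality — which is precisely what is needed to make the statement uniform in $A_{r,2}^2(\mathbb D)$. The asymptotic sharpness alluded to in the section header is then immediate from the asymptotic invariance property $V_{\alpha}^{0}(\mathbb D,\Omega)\to 0$ discussed just before the theorem.
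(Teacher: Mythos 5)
Your proposal is correct and follows exactly the route the paper takes: the paper gives no explicit proof beyond "taking into account Theorem \ref{UDE} and performing straightforward calculations," which is precisely your combination of the two one-sided bounds from Theorem \ref{UDE} with the substitution $A_{r,2}^2(\mathbb D)\le\gamma_\alpha$ and the formation of the ratio. Your explicit handling of the sign bookkeeping and of the degenerate case where the numerator is non-positive is a welcome addition that the paper leaves implicit.
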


In the case of conformal $\infty$-regular domains, we have the following assertion:
\begin{corollary}\label{Cor1}
Let $\Omega \subset \mathbb R^2$ be a conformal $\infty$-regular domain of the area $\pi$.
Then the ratio of the first two eigenvalues of the Dirichlet-Laplacian satisfies 
\begin{equation}\label{LB}
\frac{\lambda_2(\Omega)}{\lambda_1(\Omega)} \geq
\frac{\lambda_2(\mathbb D) -  \lambda_{*}^2 \lambda_1^2(\mathbb D_{\rho})\gamma_{\infty} V_{\infty}(\mathbb D,\Omega)}
{\lambda_1(\mathbb{D})+\lambda_1^2(\mathbb D_{\rho})\gamma_{\infty} V_{\infty}(\mathbb D,\Omega)},
\end{equation}
where $\lambda_{*}=\frac{\lambda_2(\mathbb D)}{\lambda_1(\mathbb D)} \approx 2.539$, $\mathbb D_{\rho}$ is the largest disc inscribed in $\Omega$ and
$$
\gamma_{\infty} = \inf\limits_{p\in \left(\frac{4}{3},2\right)} 
\left(\frac{p-1}{2-p}\right)^{\frac{2(p-1)}{p}}
\frac{\pi^{-\frac{1}{2}} 4^{-\frac{1}{p}}}{\Gamma(2/p) \Gamma(3-2/p)} < \frac{1}{5},
$$

$$
V_{\infty}(\mathbb D,\Omega)= \inf\limits_{\varphi:\mathbb D \to \Omega} \left[\bigl(\|\varphi' \mid L^{\infty}(\mathbb D)\| + 1 \bigr) \|\varphi'-1 \mid L^{2}(\mathbb D)\|\right].
$$  
\end{corollary}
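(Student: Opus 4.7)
The plan is to derive Corollary~\ref{Cor1} as a direct specialization of Theorem~\ref{PPW} to $\alpha=\infty$, with two routine simplifications: passing to the limit in the formula for $\gamma_\alpha$, and replacing the full conformal variation $V_\infty^0(\mathbb{D},\Omega)$ by the simpler majorant $V_\infty(\mathbb{D},\Omega)$ obtained by fixing the identity as the parametrization of $\mathbb{D}$.

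First I would take $\alpha\to\infty$ in the expression for $\gamma_\alpha$ from Theorem~\ref{PPW}. The lower endpoint $4\alpha/(3\alpha-2)$ of the admissible $p$-interval converges to $4/3$, the exponent $-(\alpha+2)/(2\alpha)$ of $\pi$ converges to $-1/2$, and the remaining factors do not depend on $\alpha$; this immediately produces the formula for $\gamma_\infty$ stated in the corollary.

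Next, I would justify replacing $V_\infty^0$ by $V_\infty$. The identity map is an admissible conformal parametrization $\widetilde{\varphi}:\mathbb{D}\to\mathbb{D}$ with $\widetilde{\varphi}'\equiv 1$ and $\|\widetilde{\varphi}'\mid L^{\infty}(\mathbb{D})\|=1$, so plugging this particular choice into the infimum \eqref{defvar} defining $V_\infty^0(\mathbb{D},\Omega)$ gives $V_\infty^0(\mathbb{D},\Omega)\leq V_\infty(\mathbb{D},\Omega)$. The right-hand side of the inequality in Theorem~\ref{PPW} is monotonically decreasing in the conformal variation (the numerator decreases, the denominator increases), so replacing $V_\infty^0$ by its majorant $V_\infty$ preserves the direction of the inequality and yields \eqref{LB}.

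The remaining task is to verify that $\gamma_\infty<1/5$. I would test the expression near the left endpoint $p=4/3$: there the base $(p-1)/(2-p)$ and the exponent $2(p-1)/p$ both equal $1/2$, while $\Gamma(2/p)=\Gamma(3-2/p)=\Gamma(3/2)=\sqrt{\pi}/2$ and $4^{-1/p}=2^{-3/2}$, so the limit of the bracketed expression as $p\to 4/3^{+}$ simplifies to $\pi^{-3/2}\approx 0.18$. By continuity the infimum over the open interval $(4/3,2)$ is at most $\pi^{-3/2}<1/5$. The only place any genuine attention is needed is the monotonicity observation used to substitute $V_\infty$ for $V_\infty^0$; everything else is bookkeeping with Gamma-function values and the specialization of Theorem~\ref{PPW}.
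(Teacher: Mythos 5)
Your proposal is correct and follows exactly the route the paper intends: Corollary~\ref{Cor1} is the specialization of Theorem~\ref{PPW} to $\alpha=\infty$, and your three supplementary checks (the limit $4\alpha/(3\alpha-2)\to 4/3$ and $-(\alpha+2)/(2\alpha)\to -1/2$ in $\gamma_\alpha$; the bound $V_\infty^0(\mathbb D,\Omega)\leq V_\infty(\mathbb D,\Omega)$ via the identity parametrization of $\mathbb D$ together with the monotonicity of $x\mapsto (A-Bx)/(C+Dx)$; and the evaluation $\lim_{p\to 4/3^+}=\pi^{-3/2}<1/5$) are precisely the ``straightforward calculations'' the paper leaves implicit. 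No gaps.
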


As an example consider domains bounded by epicycloids.

\begin{example}
For $n \in \mathbb{N}$, the diffeomorphism 
$$
\varphi(z)=\sqrt{\frac{n}{n+1}}\left(z+\frac{1}{n}z^n\right), \quad z=x+iy,
$$
is conformal and maps the unit disc $\mathbb D$ onto the domain $\Omega_n$ bounded by an epicycloid of $(n-1)$ cusps with area $\pi$.

Now we estimate the norm of the complex derivative $\varphi'$ in $L^{\infty}(\mathbb D)$ and calculate the norm of the quantity $(\varphi'-1)$ in $L^2(\mathbb D)$. 
Straightforward calculations yield
$$
\|\varphi'\,|\,L^{\infty}(\mathbb D)\|=\esssup\limits_{|z|\leq 1} \left(\left|\sqrt{\frac{n}{n+1}}\left(1+z^{n-1}\right)\right|\right)\leq \sqrt{\frac{4n}{n+1}}\,,
$$ 
\begin{multline*}
\|\varphi'-1\,|\,L^{2}(\mathbb D)\| \\
= \sqrt{\frac{n}{n+1}} \biggr(\iint\limits _{\mathbb D}\left|z^{n-1}-\frac{\sqrt{n+1}-\sqrt{n}}{\sqrt{n}}\right|^2\, dxdy\biggr)^{\frac{1}{2}} 
=\sqrt{2\pi \left(1- \sqrt{\frac{n}{n+1}}\right)}\,.
\end{multline*}
Then by Corollary \ref{Cor1} we have
\[
\frac{\lambda_2(\Omega_n)}{\lambda_1(\Omega_n)} \geq
\frac{\lambda_2(\mathbb D) - (2.539)^2 C(n)}
{\lambda_1(\mathbb{D})+ C(n)}\,,
\]
where 
$$
C(n)=\lambda_1^2(\mathbb D_{\rho}) \gamma_{\infty} \left(\sqrt{\frac{4n}{n+1}}+1\right) \sqrt{2\pi \left(1- \sqrt{\frac{n}{n+1}}\right)}, 
\quad \rho=\left(\frac{n-1}{n+1}\right)^{\frac{3}{4}}.
$$
\end{example}

Note that Theorem \ref{UDE} also allow us obtain asymptotically exact lower estimates for the spectral gap
between the first two Dirichlet eigenvalues for conformal $\alpha$-regular domains. Namely  

\vskip 0.2cm
\begin{theorem}
Let $\Omega \subset \mathbb R^2$ be a conformal $\alpha$-regular domain of area $\pi$.
Then the spectral gap for Dirichlet-Laplacian satisfies 
\[
\lambda_2(\Omega)-\lambda_1(\Omega) \geq \lambda_2(\mathbb D)-\lambda_1(\mathbb{D})-(\lambda_{*}^2 +1)\lambda_1^2(\mathbb D_{\rho}) \gamma_{\alpha}  V_{\alpha}^{0}(\mathbb D,\Omega),
\]
where $\lambda_{*} \approx 2.539$, $\mathbb D_{\rho}$ is the largest disc inscribed in $\Omega$ and  
\[ 
\gamma_{\alpha} = \inf\limits_{p\in \left(\frac{4 \alpha}{3\alpha -2},2\right)} 
\left(\frac{p-1}{2-p}\right)^{\frac{2(p-1)}{p}}
\frac{\pi^{-\frac{\alpha +2}{2\alpha}} 4^{-\frac{1}{p}}}{\Gamma(2/p) \Gamma(3-2/p)}\,.
\]
\end{theorem}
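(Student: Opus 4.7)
The plan is to deduce the spectral gap estimate as an immediate consequence of Theorem~\ref{UDE}, which already provides the two one-sided estimates needed to control $\lambda_2(\Omega)-\lambda_1(\Omega)$ from below. Specifically, Theorem~\ref{UDE} supplies an upper bound for $\lambda_1(\Omega)$ and a lower bound for $\lambda_2(\Omega)$, both expressed in terms of $\lambda_1(\mathbb D_\rho)$, the Poincar\'e--Sobolev constant $A_{r,2}^2(\mathbb D)$, and the conformal $\alpha$-variation $V_{\alpha}^{0}(\mathbb D,\Omega)$. Subtracting one from the other is the natural way to obtain a gap bound, so the proof will amount to writing this subtraction carefully.

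First I would recall the two estimates from Theorem~\ref{UDE}:
\[
\lambda_1(\Omega) \leq \lambda_1(\mathbb D) + \lambda_1^2(\mathbb D_\rho) A_{r,2}^2(\mathbb D) V_{\alpha}^{0}(\mathbb D,\Omega),
\]
\[
\lambda_2(\Omega) \geq \lambda_2(\mathbb D) - \lambda_{*}^2 \lambda_1^2(\mathbb D_\rho) A_{r,2}^2(\mathbb D) V_{\alpha}^{0}(\mathbb D,\Omega).
\]
Subtracting the first from the second yields
\[
\lambda_2(\Omega) - \lambda_1(\Omega) \geq \lambda_2(\mathbb D) - \lambda_1(\mathbb D) - (\lambda_{*}^2 + 1) \lambda_1^2(\mathbb D_\rho) A_{r,2}^2(\mathbb D) V_{\alpha}^{0}(\mathbb D,\Omega).
\]

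Next I would invoke Theorem~\ref{BGU} (or equivalently the final inequality in Theorem~\ref{PoinConst} combined with the choice $r = 4\alpha/(\alpha-2)$) to replace $A_{r,2}^2(\mathbb D)$ by its explicit upper bound $\gamma_\alpha$. Since the coefficient $(\lambda_{*}^2+1) \lambda_1^2(\mathbb D_\rho) V_{\alpha}^{0}(\mathbb D,\Omega)$ in front of $A_{r,2}^2(\mathbb D)$ is nonnegative, the substitution $A_{r,2}^2(\mathbb D) \leq \gamma_\alpha$ only weakens the right-hand side, preserving the inequality and producing the desired form.

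There is essentially no obstacle here; the content of the theorem has already been packaged into Theorem~\ref{UDE}. The only thing to watch is the sign bookkeeping, namely that in the lower bound for $\lambda_2-\lambda_1$ the two error terms add constructively with coefficients $\lambda_{*}^2$ and $1$, which is precisely what gives the factor $(\lambda_{*}^2 + 1)$ in the final statement. The asymptotic sharpness claim is automatic from the fact that $V_{\alpha}^{0}(\mathbb D,\Omega) \to 0$ as $\Omega$ approaches the disc, so the right-hand side converges to the disc gap $\lambda_2(\mathbb D) - \lambda_1(\mathbb D)$.
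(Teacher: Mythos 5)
Your proposal is correct and matches the paper's intent exactly: the paper states this theorem without a written proof, presenting it as an immediate consequence of Theorem~\ref{UDE}, and your argument (subtracting the upper bound for $\lambda_1(\Omega)$ from the lower bound for $\lambda_2(\Omega)$, then replacing $A_{r,2}^2(\mathbb D)$ by $\gamma_\alpha$, which only decreases the right-hand side since that term is subtracted) is precisely the intended derivation. The sign bookkeeping giving the factor $(\lambda_{*}^2+1)$ is handled correctly.
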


Lower bounds of the spectral gap for the Dirichlet-Laplacian in terms of the geometry of $\Omega$ 
represent an important problem in mathematics with applications to continuum mechanics. 
For more details on the spectral gap, see for example \cite{A06}.

\section{Inverse Payne-P\'olya-Weinberger Conjecture for quasidiscs}

In this section we precise Theorem~\ref{PPW} for Ahlfors-type domains (i.e. quasidiscs) using 
integral estimates of conformal derivatives from \cite{GPU17_2}.

Following \cite{Ahl66} a homeomorphism $\varphi:\Omega \rightarrow \Omega'$
between planar domains is called $K$-quasiconformal if it preserves
orientation, belongs to the Sobolev class $W_{\loc}^{1,2}(\Omega)$
and its directional derivatives $\partial_{\xi}$ satisfy the distortion inequality
$$
\max\limits_{\xi}|\partial_{\xi}\varphi|\leq K\min_{\xi}|\partial_{\xi}\varphi|\,\,\,
\text{a.e. in}\,\,\, \Omega \,.
$$

For any planar $K$-quasiconformal homeomorphism $\varphi:\Omega\rightarrow \Omega'$
the following sharp result is known: $J(z,\varphi)\in L^p_{\loc}(\Omega)$
for any $1 \leq p<\frac{K}{K-1}$ (\cite{A94,G81}). Hence for any conformal mapping $\varphi:\mathbb{D}\to\Omega$
of the unit disc $\mathbb{D}$ onto a $K$-quasidisc $\Omega$ its derivatives $\varphi'\in L^p(\mathbb{D})$ for any 
$1\le p<\frac{2K^{2}}{K^2-1}$~\cite{BGU15, GU16}.

Using integrability of conformal derivatives on the base of the weak inverse H\"older inequality and the measure doubling condition \cite{GPU17_2} we obtain an estimate of the constant in the inverse H\"older inequality for Jacobians of quasiconformal mappings. The following theorem was proved but not formulated in \cite{GPU17_2}.

\vskip 0.2cm

\begin{theorem}
\label{thm:IHIN}
Let $\varphi:\mathbb R^2 \to \mathbb R^2$ be a $K$-quasiconformal mapping. Then for every disc $\mathbb D \subset \mathbb R^2$ and
for any $1<\kappa<\frac{K}{K-1}$ the inverse H\"older inequality
\begin{equation*}\label{RHJ}
\left(\iint\limits_{\mathbb D} |J_{\varphi}(x,y)|^{\kappa}~dxdy \right)^{\frac{1}{\kappa}}
\leq \frac{C_\kappa^2 K \pi^{\frac{1}{\kappa}-1}}{4}
\exp\left\{{\frac{K \pi^2(2+ \pi^2)^2}{2\log3}}\right\}\iint\limits_{\mathbb D} |J_{\varphi}(x,y)|~dxdy
\end{equation*}
holds. Here
$$
C_\kappa=\frac{10^{6}}{[(2\kappa -1)(1- \nu)]^{1/2\kappa}}, \quad \nu = 10^{8 \kappa}\frac{2\kappa -2}{2\kappa -1}(24\pi^2K)^{2\kappa}<1.
$$
\end{theorem}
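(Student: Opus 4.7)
The strategy is to combine, in the spirit of Gehring's bootstrap argument, the weak reverse H\"older inequality for Jacobians of $K$-quasiconformal mappings with the measure doubling property of the push-forward measure $d\mu = J_\varphi\,dxdy$; both ingredients are established in \cite{GPU17_2}.

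\textbf{Step 1 (Gehring self-improvement).} Starting from the standard weak reverse H\"older estimate for $J_\varphi$ on a disc and its double, I would invoke Gehring's quantitative self-improvement lemma to raise the integrability exponent from $1$ to any $\kappa<K/(K-1)$, producing
\[
\left(\int_{\mathbb D}J_\varphi^{\kappa}\,dxdy\right)^{1/\kappa}\le C_\kappa^{2}\,K\,|\mathbb D|^{\frac{1}{\kappa}-1}\int_{2\mathbb D}J_\varphi\,dxdy,
\]
with exactly the constant $C_\kappa$ of the statement. The algebraic form of $C_\kappa$, and in particular the condition $\nu<1$, emerges from the Calder\'on--Zygmund / stopping-time iteration underlying Gehring's lemma: $\nu<1$ is the explicit convergence threshold for that iteration, and is the numerical avatar of the range $\kappa<K/(K-1)$; the extra factor $K$ in front absorbs the distortion bound $|\nabla\varphi|^{2}\le K J_\varphi$ that enters the iteration.

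\textbf{Step 2 (Measure doubling).} Next, I would replace the integral over $2\mathbb D$ by one over $\mathbb D$ via the quasiconformal doubling inequality from \cite{GPU17_2},
\[
\int_{2\mathbb D}J_\varphi\,dxdy\le \exp\!\left\{\frac{K\pi^{2}(2+\pi^{2})^{2}}{2\log 3}\right\}\int_{\mathbb D}J_\varphi\,dxdy.
\]
This rests on quasi-invariance of the conformal modulus: the round annulus $2\mathbb D\setminus\mathbb D$ has modulus $\log 2$, and under $K$-quasiconformal distortion one compares $\mu(2\mathbb D)$ and $\mu(\mathbb D)$ by exponentiating the distorted modulus; the numerical form $K\pi^{2}(2+\pi^{2})^{2}/(2\log 3)$ is precisely what the standard modulus inequality on $2\mathbb D\setminus\mathbb D$ yields.

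\textbf{Step 3 (Assembly).} Combining Steps 1 and 2 and plugging in the scale-normalised value $|\mathbb D|=\pi$ collects all constants into the stated product, with the geometric prefactor $\pi^{\frac{1}{\kappa}-1}/4$ arising from $|2\mathbb D|=4|\mathbb D|$. The main obstacle is not conceptual --- both ingredients are already available in \cite{GPU17_2} --- but purely the bookkeeping of numerical constants: one must carry the exact powers of $10$, $\pi$, and $K$ through Gehring's iteration and through the modulus computation without loss, and verify that the threshold $\nu<1$ is consistent throughout the whole stated range of $\kappa$.
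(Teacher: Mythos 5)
The paper offers no proof of this theorem at all: it states that the result ``was proved but not formulated'' in \cite{GPU17_2} and describes the method in a single sentence as combining the weak inverse H\"older inequality with the measure doubling condition --- precisely the two ingredients of your Steps 1 and 2, so your outline coincides with the route the paper points to. That said, neither you nor the paper actually carries out the constant-tracking, and the explicit constants are the entire content of the statement (qualitative higher integrability being classical), so your proposal remains a sketch that cannot be checked against anything in this paper. One concrete caution: your claim that $\nu<1$ is merely ``the numerical avatar'' of the range $\kappa<\frac{K}{K-1}$ is not accurate --- since $\nu=10^{8\kappa}\frac{2\kappa-2}{2\kappa-1}(24\pi^2K)^{2\kappa}$ carries the enormous factor $10^{8\kappa}(24\pi^2K)^{2\kappa}$, the requirement $\nu<1$ forces $\kappa$ to be very close to $1$ and is a genuinely stronger restriction than the stated admissible range of $\kappa$; it is the convergence threshold of the Gehring iteration, not a reformulation of the integrability exponent bound.
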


If $\Omega$ is a $K$-quasidisc, then a conformal mapping $\varphi: \mathbb D\to\Omega$ allows $K^2$-quasiconformal reflection \cite{Ahl}. 
Hence, by Theorem~\ref{thm:IHIN} we obtain the following integral estimates of complex derivatives of conformal 
mapping $\varphi:\mathbb D\to\Omega$ of the unit disc onto a $K$-quasidisc $\Omega$:

\begin{corollary}\label{Est_Der}
Let $\Omega\subset\mathbb R^2$ be a $K$-quasidisc and $\varphi:\mathbb D\to\Omega$ be a conformal mapping. Suppose that  $2<\gamma<\frac{2K^2}{K^2-1}$.
Then 
\begin{equation*}\label{Ineq_2}
\left(\iint\limits_{\mathbb D} |\varphi'(x,y)|^{\gamma}~dxdy \right)^{\frac{1}{\gamma}}
\leq \frac{C_\gamma K \pi^{\frac{2-\gamma}{2 \gamma}}}{2}
\exp\left\{{\frac{K^2 \pi^2(2+ \pi^2)^2}{4\log3}}\right\}\cdot |\Omega|^{\frac{1}{2}}.
\end{equation*}
where
$$
C_\gamma=\frac{10^{6}}{[(\gamma -1)(1- \nu)]^{1/\gamma}}, \quad \nu = 10^{4 \gamma}\frac{\gamma -2}{\gamma -1}(24\pi^2K^2)^{\gamma}<1.
$$
\end{corollary}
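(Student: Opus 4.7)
The plan is to reduce Corollary \ref{Est_Der} directly to Theorem \ref{thm:IHIN} via a quasiconformal extension. Since $\Omega$ is a $K$-quasidisc, by Ahlfors' reflection theorem the conformal mapping $\varphi:\mathbb D \to \Omega$ admits a $K^{2}$-quasiconformal reflection, hence extends to a $K^{2}$-quasiconformal homeomorphism $\widetilde{\varphi}:\mathbb R^{2}\to\mathbb R^{2}$. This puts us in the setting of the previous theorem applied to $\widetilde{\varphi}$ with quasiconformality constant $K^{2}$ in place of $K$.

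Next I would set $\kappa=\gamma/2$. The assumption $2<\gamma<\frac{2K^{2}}{K^{2}-1}$ becomes $1<\kappa<\frac{K^{2}}{K^{2}-1}$, which is exactly the admissible range in Theorem \ref{thm:IHIN} after replacing $K$ by $K^{2}$. On the disc, where $\widetilde{\varphi}$ coincides with the conformal map $\varphi$, the Jacobian reduces to $J_{\varphi}(x,y)=|\varphi'(x,y)|^{2}$, so
\[
|J_{\varphi}|^{\kappa}=|\varphi'|^{2\kappa}=|\varphi'|^{\gamma},\qquad \iint_{\mathbb D}|J_{\varphi}|\,dxdy=|\varphi(\mathbb D)|=|\Omega|,
\]
the second identity being the change-of-variables formula.

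Substituting these into the inverse Hölder inequality of Theorem \ref{thm:IHIN} (with $K\rightsquigarrow K^{2}$ and $\kappa=\gamma/2$) yields
\[
\left(\iint_{\mathbb D}|\varphi'|^{\gamma}\,dxdy\right)^{2/\gamma}\leq \frac{C_{\kappa}^{2}K^{2}\pi^{1/\kappa-1}}{4}\exp\!\left\{\frac{K^{2}\pi^{2}(2+\pi^{2})^{2}}{2\log 3}\right\}|\Omega|.
\]
Taking square roots of both sides and using $(1/\kappa-1)/2=(2-\gamma)/(2\gamma)$ produces the desired exponent on $\pi$ and the desired factor $|\Omega|^{1/2}$. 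A routine rewriting of the constant $C_{\kappa}=10^{6}/[(2\kappa-1)(1-\nu)]^{1/(2\kappa)}$ with $2\kappa=\gamma$ gives exactly $C_{\gamma}=10^{6}/[(\gamma-1)(1-\nu)]^{1/\gamma}$, and $\nu=10^{8\kappa}\frac{2\kappa-2}{2\kappa-1}(24\pi^{2}K^{2})^{2\kappa}$ becomes $10^{4\gamma}\frac{\gamma-2}{\gamma-1}(24\pi^{2}K^{2})^{\gamma}$, matching the statement.

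The proof is essentially bookkeeping once the extension step is in place; the only genuine ingredients are Ahlfors' $K^{2}$-reflection (to make Theorem \ref{thm:IHIN} applicable at all) and the conformal identity $J_{\varphi}=|\varphi'|^{2}$ that converts the Jacobian inequality into the derivative inequality. The mildest obstacle will be cleanly tracking the exponents $\kappa,\gamma,1/\kappa-1$ through the square root, and verifying that the admissibility window $1<\kappa<K^{2}/(K^{2}-1)$ and the smallness condition $\nu<1$ transform into the ones stated in the corollary with $\gamma=2\kappa$.
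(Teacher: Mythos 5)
Your proposal is correct and follows exactly the route the paper intends: the paper derives Corollary \ref{Est_Der} by invoking the Ahlfors $K^{2}$-quasiconformal reflection and then applying Theorem \ref{thm:IHIN} with $K$ replaced by $K^{2}$ and $\kappa=\gamma/2$, using $J_{\varphi}=|\varphi'|^{2}$ and $\iint_{\mathbb D}|J_{\varphi}|\,dxdy=|\Omega|$ before taking square roots. Your bookkeeping of the exponents and of the constants $C_{\gamma}$ and $\nu$ checks out against the stated formulas.
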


Combining Theorem~\ref{UDE} and Corollary~\ref{Est_Der} we obtain spectral estimates of the Laplace operator with the Dirichlet boundary condition:

\vskip 0.2cm
\begin{theorem}\label{Dir_Est}
Let $\Omega \subset \mathbb R^2$ be a $K$-quasidisc of area $\pi$.
Then the following inequalities hold
\begin{equation*}
\lambda_1(\Omega) \leq 
\lambda_1(\mathbb{D})+\lambda_1^2(\mathbb D_{\rho}) M_{\alpha}(K) \|\varphi'-1 \mid L^{2}(\mathbb D)\|,
\end{equation*}
\begin{equation*}
\lambda_2(\Omega) \geq
\lambda_2(\mathbb D) - \lambda_{*}^2 \cdot \lambda_1^2(\mathbb D_{\rho}) M_{\alpha}(K) \|\varphi'-1 \mid L^{2}(\mathbb D)\|,
\end{equation*}
where $\lambda_{*}=\frac{\lambda_2(\mathbb D)}{\lambda_1(\mathbb D)} \approx 2.539$, $\mathbb D_{\rho}$ is the largest disc inscribed in $\Omega$ and 
\begin{multline*}
M_{\alpha}(K)= \inf\limits_{2< \alpha < \alpha^*} 
\Biggl\{
\inf\limits_{p\in \left(\frac{4 \alpha}{3\alpha -2},2\right)} 
\left(\frac{p-1}{2-p}\right)^{\frac{2(p-1)}{p}}
\frac{\pi^{-\frac{\alpha +2}{2\alpha}} 4^{-\frac{1}{p}}}{\Gamma(2/p) \Gamma(3-2/p)} \\
\times
\left(\frac{C_\alpha K \pi^{\frac{2-\alpha}{2\alpha}}}{2}
\exp\left\{{\frac{K^2 \pi^2(2+ \pi^2)^2}{4\log3}}\right\} \cdot |\Omega|^{\frac{1}{2}} +\pi^{\frac{1}{\alpha}} \right) \Biggr\}, \\
C_\alpha=\frac{10^{6}}{[(\alpha -1)(1- \nu(\alpha))]^{1/\alpha}}.
\end{multline*}
\end{theorem}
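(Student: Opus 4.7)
The strategy is to apply Theorem~\ref{UDE} with the two inequalities already in hand, then replace the abstract invariant $V_{\alpha}^{0}(\mathbb{D},\Omega)$ by an explicit upper bound of the form
\[
V_{\alpha}^{0}(\mathbb{D},\Omega)\;\le\;\bigl(\|\varphi'\mid L^{\alpha}(\mathbb{D})\|+\pi^{1/\alpha}\bigr)\,\|\varphi'-1\mid L^{2}(\mathbb{D})\|,
\]
where $\varphi:\mathbb{D}\to\Omega$ is any conformal mapping. This is the key point: in the infimum defining $V_{\alpha}^{0}(\mathbb{D},\Omega)$ I would simply choose the identity $z\mapsto z$ as the conformal self-map of the unit disc, so that its derivative is $1$ and its $L^{\alpha}(\mathbb{D})$-norm is $\pi^{1/\alpha}$. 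This transforms the right-hand side of Theorem~\ref{UDE} into an estimate that depends only on a single conformal map $\varphi:\mathbb{D}\to\Omega$.

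Next I would use the hypothesis that $\Omega$ is a $K$-quasidisc, which guarantees that any conformal $\varphi:\mathbb{D}\to\Omega$ extends to a $K^{2}$-quasiconformal reflection, so that $\varphi'\in L^{\alpha}(\mathbb{D})$ for every $\alpha$ in the range $2<\alpha<\alpha^{*}:=\dfrac{2K^{2}}{K^{2}-1}$. For any such $\alpha$, Corollary~\ref{Est_Der} (applied with $\gamma=\alpha$) yields the explicit bound
\[
\|\varphi'\mid L^{\alpha}(\mathbb{D})\|\;\le\;\frac{C_{\alpha}K\pi^{(2-\alpha)/(2\alpha)}}{2}\exp\!\left\{\frac{K^{2}\pi^{2}(2+\pi^{2})^{2}}{4\log 3}\right\}\cdot|\Omega|^{1/2},
\]
with $C_{\alpha}$ as in the statement of Corollary~\ref{Est_Der}. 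Plugging this into the preceding display and combining with the Poincar\'e--Sobolev constant $A_{r,2}^{2}(\mathbb{D})\le\gamma_{\alpha}$ already furnished by Theorem~\ref{BGU}, one arrives at inequalities of exactly the stated form, but with the bracketed coefficient depending on the particular $\alpha\in(2,\alpha^{*})$ chosen.

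Since $\alpha$ was arbitrary in the admissible range, the final step is to take the infimum over $\alpha\in(2,\alpha^{*})$ of the resulting coefficient; this infimum is by definition $M_{\alpha}(K)$. Applying this reasoning separately to the upper estimate for $\lambda_{1}$ and the lower estimate for $\lambda_{2}$ furnished by Theorem~\ref{UDE} gives the two claimed inequalities. The main technical concern, rather than a deep obstacle, is bookkeeping: one must verify that the admissible range $\alpha\in(2,\alpha^{*})$ is compatible with the range $p\in\bigl(\tfrac{4\alpha}{3\alpha-2},2\bigr)$ appearing in $\gamma_{\alpha}$ (which it is, since the latter is non-empty for every $\alpha>2$), and that the constant $\nu(\alpha)$ in Corollary~\ref{Est_Der} remains strictly below $1$ for the chosen $\alpha$, so that $C_{\alpha}$ is finite and the infimum in the definition of $M_{\alpha}(K)$ is meaningful.
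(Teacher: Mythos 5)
Your proposal is correct and follows essentially the same route as the paper: bound $V_{\alpha}^{0}(\mathbb D,\Omega)$ by taking the identity map on $\mathbb D$ (giving the factor $\|\varphi'\mid L^{\alpha}(\mathbb D)\|+\pi^{1/\alpha}$), estimate $\|\varphi'\mid L^{\alpha}(\mathbb D)\|$ via Corollary~\ref{Est_Der} using the quasiconformal reflection, insert the bound $A_{r,2}^{2}(\mathbb D)\le\gamma_{\alpha}$, and take the infimum over $\alpha\in(2,\alpha^{*})$. This is exactly the paper's argument.
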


\begin{proof}
Quasidiscs are conformal $\alpha$-regular domains for $2<\alpha<\frac{2K^2}{K^2-1}$ \cite{GU16}. Then by Corollary~\ref{UDE} for any $2<\alpha<\frac{2K^2}{K^2-1}$ we have
\begin{equation}\label{Est_1}
\lambda_1(\Omega) \leq 
\lambda_1(\mathbb D) + \lambda_1^2(\mathbb D_{\rho}) A_{r,2}^2(\mathbb D)
\bigl(\|\varphi' \mid L^{\alpha}(\mathbb D)\| + \pi^{\frac{1}{\alpha}}\bigr) \|\varphi'-1 \mid L^{2}(\mathbb D)\|
\end{equation}
and
\begin{equation}\label{Estim_1}
\lambda_2(\Omega) \geq 
\lambda_2(\mathbb D) - \lambda_{*}^2 \cdot \lambda_1^2(\mathbb D_{\rho}) A_{r,2}^2(\mathbb D)
\bigl(\|\varphi' \mid L^{\alpha}(\mathbb D)\| + \pi^{\frac{1}{\alpha}}\bigr) \|\varphi'-1 \mid L^{2}(\mathbb D)\|.
\end{equation}

Now we estimate the integral in the right-hand side of these inequalities. According to Corollary~\ref{Est_Der} we obtain
\begin{multline}\label{Est_2}
\|\varphi'\,|\,L^\alpha(\mathbb D)\|=
\left(\iint\limits_{\mathbb D} |\varphi'(x,y)|^\alpha~dxdy\right)^{\frac{1}{\alpha}} \\
{} \leq \frac{C_\alpha K \pi^{\frac{2-\alpha}{2\alpha}}}{2}
\exp\left\{{\frac{K^2 \pi^2(2+ \pi^2)^2}{4\log3}}\right\}\cdot |\Omega|^{\frac{1}{2}}.
\end{multline}

Combining the inequalities~\eqref{Est_1} and~\eqref{Estim_1} consistently with the inequality~\eqref{Est_2} and given that
\[
A_{r,2}^2(\mathbb D) \leq \inf\limits_{p\in \left(\frac{4 \alpha}{3\alpha -2},2\right)} 
\left(\frac{p-1}{2-p}\right)^{\frac{2(p-1)}{p}}
\frac{\pi^{-\frac{\alpha +2}{2\alpha}} 4^{-\frac{1}{p}}}{\Gamma(2/p) \Gamma(3-2/p)}, \quad  r=\frac{4 \alpha}{\alpha -2},
\]
we get the required result.
\end{proof}

Taking into account Theorem~\ref{PPW} and Theorem~\ref{Dir_Est} we obtain lower bound in the 
Payne-P\'olya-Weinberger conjecture for quasidiscs:
\begin{theorem}
Let $\Omega \subset \mathbb R^2$ be a $K$-quasidisc of area $\pi$.
Then the ratio of the first two eigenvalues of the Dirichlet Laplacian satisfies 
\[
\frac{\lambda_2(\Omega)}{\lambda_1(\Omega)} \geq
\frac{\lambda_2(\mathbb D) - \lambda_{*}^2 \cdot \lambda_1^2(\mathbb D_{\rho}) M_{\alpha}(K) \|\varphi'-1 \mid L^{2}(\mathbb D)\|}
{\lambda_1(\mathbb{D})+\lambda_1^2(\mathbb D_{\rho}) M_{\alpha}(K) \|\varphi'-1 \mid L^{2}(\mathbb D)\|},
\]
where $\lambda_{*}=\frac{\lambda_2(\mathbb D)}{\lambda_1(\mathbb D)} \approx 2.539$, $\mathbb D_{\rho}$ is the largest disc inscribed in $\Omega$ and
\begin{multline*}
M_{\alpha}(K)= \inf\limits_{2< \alpha < \alpha^*} 
\Biggl\{
\inf\limits_{p\in \left(\frac{4 \alpha}{3\alpha -2},2\right)} 
\left(\frac{p-1}{2-p}\right)^{\frac{2(p-1)}{p}}
\frac{\pi^{-\frac{\alpha +2}{2\alpha}} 4^{-\frac{1}{p}}}{\Gamma(2/p) \Gamma(3-2/p)} \\
\times
\left(\frac{C_\alpha K \pi^{\frac{2-\alpha}{2\alpha}}}{2}
\exp\left\{{\frac{K^2 \pi^2(2+ \pi^2)^2}{4\log3}}\right\} \cdot |\Omega|^{\frac{1}{2}} +\pi^{\frac{1}{\alpha}} \right) \Biggr\}, \\
C_\alpha=\frac{10^{6}}{[(\alpha -1)(1- \nu(\alpha))]^{1/\alpha}}.
\end{multline*}
\end{theorem}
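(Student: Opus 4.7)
The proof is essentially a packaging step: the two one-sided estimates already established in Theorem~\ref{Dir_Est} for $\lambda_1(\Omega)$ and $\lambda_2(\Omega)$ on a $K$-quasidisc of area $\pi$ can be divided to produce a lower bound on the ratio $\lambda_2(\Omega)/\lambda_1(\Omega)$. My plan is to start from the upper bound
\[
\lambda_1(\Omega) \leq \lambda_1(\mathbb D) + \lambda_1^2(\mathbb D_\rho)\, M_\alpha(K)\, \|\varphi'-1\mid L^2(\mathbb D)\|,
\]
and the lower bound
\[
\lambda_2(\Omega) \geq \lambda_2(\mathbb D) - \lambda_*^2\, \lambda_1^2(\mathbb D_\rho)\, M_\alpha(K)\, \|\varphi'-1\mid L^2(\mathbb D)\|,
\]
both provided by Theorem~\ref{Dir_Est}, and simply divide.

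Concretely, assuming the bracket on the right of the $\lambda_2$-estimate is positive (otherwise the claimed inequality is trivial, since $\lambda_2(\Omega)>0$ always and the right-hand side is non-positive), the inequality
\[
\frac{\lambda_2(\Omega)}{\lambda_1(\Omega)} \geq \frac{\lambda_2(\mathbb D) - \lambda_*^2\, \lambda_1^2(\mathbb D_\rho)\, M_\alpha(K)\, \|\varphi'-1\mid L^2(\mathbb D)\|}{\lambda_1(\mathbb D)+\lambda_1^2(\mathbb D_\rho)\, M_\alpha(K)\, \|\varphi'-1\mid L^2(\mathbb D)\|}
\]
follows because the function $x/y$ is increasing in $x>0$ and decreasing in $y>0$: the numerator of the ratio is bounded below by the lower bound of $\lambda_2(\Omega)$, and the denominator is bounded above by the upper bound of $\lambda_1(\Omega)$. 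The choice of $M_\alpha(K)$ is made to be the same infimum expression in both estimates, so the matching factors $M_\alpha(K)\|\varphi'-1\mid L^2(\mathbb D)\|$ in the numerator and denominator appear automatically from Theorem~\ref{Dir_Est}.

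I would then close by recording the definition of $M_\alpha(K)$ and $C_\alpha$ verbatim, since they are inherited unchanged from Theorem~\ref{Dir_Est}. There is essentially no genuine obstacle here: the previous theorem was proved by combining Corollary~\ref{UDE} with the quasiconformal estimate of Corollary~\ref{Est_Der}, so the present statement is the ratio form of that result. The only small point to watch is the positivity of the numerator; as in the conformal-regular analogue of Theorem~\ref{PPW}, the statement is an asymptotic one and is meaningful precisely when $\|\varphi'-1\mid L^2(\mathbb D)\|$ is small, in which case the lower bound is close to $\lambda_2(\mathbb D)/\lambda_1(\mathbb D)=\lambda_*$, consistent with the Payne--P\'olya--Weinberger upper bound.
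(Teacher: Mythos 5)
Your proposal is correct and coincides with the paper's own argument: the paper states this theorem as an immediate consequence of Theorem~\ref{Dir_Est} (in the spirit of how Theorem~\ref{PPW} follows from Theorem~\ref{UDE}), namely by bounding the numerator below via the $\lambda_2$-estimate and the denominator above via the $\lambda_1$-estimate and dividing. Your remark on the trivial case when the numerator bound is non-positive is a sensible precaution that the paper leaves implicit.
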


\section{Estimates of the high eigenvalues of Dirichlet-Laplacian}

In \cite{A99} were formulated open problems on eigenvalues of the Dirichlet Laplacian. 
In this section we give a partial answer on the problem of the ratio of the high eigenvalues.

On the basis of Theorem \ref{BGU} we prove the following result:

\begin{theorem}\label{Spec_Est}
Let $\Omega$ be a conformal $\alpha$-regular domain such that $\mathbb D \subseteq t \Omega$, $t>0$.
Then for any $k \in \mathbb N$ the following inequalities     
$$
\lambda_k(\mathbb D)- t^4 \lambda_k^2(\mathbb D) A_{r,2}^2(\mathbb D)V_{\alpha}^{0}(\mathbb D,\Omega) \leq \lambda_k(\Omega) \leq t^2 \lambda_k(\mathbb D)
$$
hold.
\end{theorem}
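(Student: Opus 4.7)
The plan is to combine the spectral stability estimate of Theorem~\ref{BGU} (applied with $\widetilde{\Omega}=\mathbb D$) with the classical Rayleigh domain monotonicity and the $s^{-2}$-scaling law for Dirichlet eigenvalues. Both inequalities in the statement will fall out essentially by direct substitution once the hypothesis $\mathbb D \subseteq t\Omega$ is rewritten in the form $\tfrac{1}{t}\mathbb D \subseteq \Omega$.

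For the upper bound, the inclusion $\tfrac{1}{t}\mathbb D \subseteq \Omega$ together with $\lambda_k(sU)=s^{-2}\lambda_k(U)$ gives $\lambda_k(\tfrac{1}{t}\mathbb D)=t^{2}\lambda_k(\mathbb D)$, and Dirichlet domain monotonicity (larger domain $\Rightarrow$ smaller eigenvalue) yields
$$
\lambda_k(\Omega) \leq \lambda_k\!\left(\tfrac{1}{t}\mathbb D\right) = t^{2}\lambda_k(\mathbb D),
$$
which is the right-hand inequality.

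For the lower bound, I invoke Theorem~\ref{BGU} with $\widetilde{\Omega}=\mathbb D$ (the disc is trivially conformal $\alpha$-regular for every $\alpha\in(2,\infty]$, since $\varphi=\mathrm{id}$ has $|\varphi'|\equiv 1\in L^{\alpha}(\mathbb D)$) and obtain
$$
\lambda_k(\Omega) \geq \lambda_k(\mathbb D) - \max\{\lambda_k^{2}(\Omega),\lambda_k^{2}(\mathbb D)\}\, A_{r,2}^{2}(\mathbb D)\, V_{\alpha}^{0}(\mathbb D,\Omega).
$$
It then suffices to dominate the maximum by $t^{4}\lambda_k^{2}(\mathbb D)$. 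We may assume $t\geq 1$, because enlarging $t$ only relaxes the hypothesis $\mathbb D\subseteq t\Omega$ (and weakens the already-proved upper bound). For $t\geq 1$ one has $\lambda_k^{2}(\mathbb D)\leq t^{4}\lambda_k^{2}(\mathbb D)$, while squaring the upper bound gives $\lambda_k^{2}(\Omega)\leq t^{4}\lambda_k^{2}(\mathbb D)$. Hence $\max\{\lambda_k^{2}(\Omega),\lambda_k^{2}(\mathbb D)\}\leq t^{4}\lambda_k^{2}(\mathbb D)$, and the left-hand inequality follows.

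No real obstacle is expected: the argument is just a recombination of Theorem~\ref{BGU} with the scaling/monotonicity of Dirichlet eigenvalues. The one bit of care is to feed the upper bound back into the stability estimate so that the $\max$ in Theorem~\ref{BGU} can be replaced by the explicit quantity $t^{4}\lambda_k^{2}(\mathbb D)$, which is the only reason both inequalities in the theorem are coupled through the same factor $t$.
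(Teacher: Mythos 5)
Your proof is correct and follows essentially the same route as the paper's: the upper bound via domain monotonicity plus the scaling law $\lambda_k(s U)=s^{-2}\lambda_k(U)$, and the lower bound by feeding that upper bound back into Theorem~\ref{BGU} with $\widetilde{\Omega}=\mathbb D$ so that $\max\{\lambda_k^2(\Omega),\lambda_k^2(\mathbb D)\}$ is dominated by $t^4\lambda_k^2(\mathbb D)$. One remark on the step you flagged: your reduction ``we may assume $t\geq 1$'' does not actually recover the case $t<1$ (the claimed lower bound is \emph{stronger} for smaller $t$, so proving it for a larger $t$ gives nothing), but the paper makes the identical implicit assumption when it writes $\max\{t^4\lambda_k^2(t\Omega),\lambda_k^2(\mathbb D)\}=t^4\lambda_k^2(\mathbb D)$ without comment, and the theorem is only meaningful (indeed, only true) for $t\geq 1$; so you are, if anything, slightly more careful than the source.
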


\begin{proof}
Since $\Omega$ is a conformal regular domain then by Theorem \ref{BGU} in case $\widetilde{\Omega}=\mathbb D$, we have
\[
|\lambda_k(\Omega)-\lambda_k(\mathbb D)|
\leq \max \left\{\lambda_k^2(\Omega),\lambda_k^2(\mathbb D)\right\} A_{r,2}^2(\mathbb D)V_{\alpha}^{0}(\mathbb D,\Omega).
\]
Using the definition of the absolute value we get
\begin{multline}\label{Inequaty_1}
-\max \left\{\lambda_k^2(\Omega),\lambda_k^2(\mathbb D)\right\} A_{r,2}^2(\mathbb D)V_{\alpha}^{0}(\mathbb D,\Omega)\\
\leq \lambda_k(\Omega)-\lambda_k(\mathbb D)\\
\leq \max \left\{\lambda_k^2(\Omega),\lambda_k^2(\mathbb D)\right\} A_{r,2}^2(\mathbb D)V_{\alpha}^{0}(\mathbb D,\Omega).
\end{multline}

Now we calculate maximum between $\lambda_k(\Omega)$ and $\lambda_k(\mathbb D)$ using
the property of domain monotonicity for the Dirichlet eigenvalues and the following equality \cite{GN13}
\[
\lambda_k(t \Omega)=\frac{\lambda_k(\Omega)}{t^2}.
\]
Hence we have
\[
\max \left\{\lambda_k^2(\Omega),\lambda_k^2(\mathbb D)\right\}= \max \left\{t^4 \lambda_k^2(t \Omega),\lambda_k^2(\mathbb D)\right\}= t^4 \lambda_k^2(\mathbb D).
\]
Taking into account the last equality we can rewrite the inequality \eqref{Inequaty_1} as
\begin{equation}\label{Equality_2}
-t^4 \lambda_k^2(\mathbb D) A_{r,2}^2(\mathbb D)V_{\alpha}^{0}(\mathbb D,\Omega)\\
\leq \lambda_k(\Omega)-\lambda_k(\mathbb D)\\
\leq t^4 \lambda_k^2(\mathbb D) A_{r,2}^2(\mathbb D)V_{\alpha}^{0}(\mathbb D,\Omega).
\end{equation} 

Because $\mathbb D \subseteq t \Omega$ we obtain by straightforward calculations  the following upper estimate for eigenvalues of the Dirichlet-Laplacian  
$$
\lambda_k(\Omega) = t^2 \lambda_k(t \Omega) \leq  t^2 \lambda_k(\mathbb D).
$$  
Now consider the lower estimate of \eqref{Equality_2} and the last upper estimate. So we obtain   
$$
\lambda_k(\mathbb D)- t^4 \lambda_k^2(\mathbb D) A_{r,2}^2(\mathbb D)V_{\alpha}^{0}(\mathbb D,\Omega) \leq \lambda_k(\Omega) \leq t^2 \lambda_k(\mathbb D).
$$
\end{proof}

As a consequence of Theorem \ref{Spec_Est} we obtain asymptotically exact lower estimates for ratios of Dirichlet eigenvalues in the case of conformal regular domains.
\begin{corollary}\label{Spec_Est_1}
Let $\Omega$ be a conformal $\alpha$-regular domain and $\mathbb D \subseteq t \Omega$ for $t>0$.
Then for any $m,n \in \mathbb N$, $m<n$, the following inequality     
$$
\frac{\lambda_n(\Omega)}{\lambda_m(\Omega)} \geq
\frac{\lambda_n(\mathbb D)- t^4 \lambda_n^2(\mathbb D) A_{r,2}^2(\mathbb D)V_{\alpha}^{0}(\mathbb D,\Omega)}{t^2 \lambda_m(\mathbb D)} 
$$
holds.
\end{corollary}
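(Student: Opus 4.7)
The plan is to obtain the claimed inequality by applying the two-sided estimate of Theorem~\ref{Spec_Est} separately to the numerator and the denominator of the ratio $\lambda_n(\Omega)/\lambda_m(\Omega)$. Since Theorem~\ref{Spec_Est} already packages the necessary conformal-variation bound, the task reduces to a simple arithmetic manipulation.

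First I would fix $m<n$ and apply Theorem~\ref{Spec_Est} with $k=n$ to get the lower estimate
\[
\lambda_n(\Omega) \geq \lambda_n(\mathbb D)- t^4 \lambda_n^2(\mathbb D) A_{r,2}^2(\mathbb D)V_{\alpha}^{0}(\mathbb D,\Omega).
\]
Then I would apply the same theorem with $k=m$, but this time using the upper bound
\[
\lambda_m(\Omega) \leq t^2 \lambda_m(\mathbb D).
\]
Dividing the first inequality by the second (which is positive since $\lambda_m(\mathbb D)>0$ and $t>0$) yields the claimed estimate. One should handle the case in which the right-hand side of the lower bound for $\lambda_n(\Omega)$ is negative: there the assertion is automatic, since $\lambda_n(\Omega)/\lambda_m(\Omega)>0$, so no work is needed.

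There is essentially no technical obstacle here, as both ingredients come packaged in Theorem~\ref{Spec_Est}; the only point deserving brief mention is the monotonicity of the ratio under the two one-sided estimates, namely that replacing the numerator by a smaller quantity and the denominator by a larger quantity preserves the direction of the inequality. The hypothesis $\mathbb D \subseteq t\Omega$ is inherited from Theorem~\ref{Spec_Est} and is used implicitly in both the lower estimate for $\lambda_n(\Omega)$ and the upper estimate for $\lambda_m(\Omega)$ via the scaling identity $\lambda_k(t\Omega)=\lambda_k(\Omega)/t^2$ together with domain monotonicity.
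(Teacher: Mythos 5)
Your proposal is correct and is exactly the argument the paper intends: the corollary is stated as an immediate consequence of Theorem~\ref{Spec_Est}, obtained by taking the lower bound with $k=n$ in the numerator and the upper bound with $k=m$ in the denominator. The brief remarks on positivity and on the monotonicity of the ratio are sound (and in fact the sign case-split is not even needed, since decreasing a numerator and increasing a positive denominator always decreases the fraction).
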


As an illustration we again consider the domains bounded by the epicycloid.
\begin{example}
For $k \in \mathbb{N}$, the diffeomorphism 
$$
\psi(z)=z+\frac{1}{k}z^k, \quad z=x+iy,
$$
is conformal and maps the unit disc $\mathbb D$ onto the domain $\Omega_k$ bounded by an epicycloid of $(k-1)$ cusps, inscribed in the circle $|w|=(k+1)/k$. 
Note that $\mathbb D \not\subset \Omega_k$. However, if put $t=k^2/(k-1)^2$ then $\mathbb D \subseteq t \Omega_k$.
Then by Corollary \ref{Spec_Est_1} we have    
$$
\frac{\lambda_n(\Omega_k)}{\lambda_m(\Omega_k)} \geq
\frac{\lambda_n(\mathbb D)- \frac{k^8}{(k-1)^8} \lambda_n^2(\mathbb D) A_{r,2}^2(\mathbb D)V^0_{\alpha}(\mathbb D,\Omega_k)}{\frac{k^4}{(k-1)^4} \lambda_m(\mathbb D)}.
$$
\end{example}

\textbf{Acknowledgements.} The first author was supported by the United States-Israel Binational Science Foundation (BSF Grant No. 2014055). The second author
was partly supported by the Ministry of Education and Science of the Russian Federation 
in the framework of the research Project No. 2.3208.2017/4.6, by RFBR Grant No. 18-31-00011.

\vskip 0.3cm

\vskip 0.3cm

Department of Mathematics, Ben-Gurion University of the Negev, P.O.Box 653, Beer Sheva, 8410501, Israel 
 
\emph{E-mail address:} \email{vladimir@math.bgu.ac.il} \\           
       
 Division for Mathematics and Computer Sciences, Tomsk Polytechnic University, 634050 Tomsk, Lenin Ave. 30, Russia; International Laboratory SSP \& QF, Tomsk State University, 634050 Tomsk, Lenin Ave. 36, Russia

 \emph{Current address:} Department of Mathematics, Ben-Gurion University of the Negev, P.O.Box 653, 
  Beer Sheva, 8410501, Israel  
							
 \emph{E-mail address:} \email{vpchelintsev@vtomske.ru}   \\
			  
	Department of Mathematics, Ben-Gurion University of the Negev, P.O.Box 653, Beer Sheva, 8410501, Israel 
							
	\emph{E-mail address:} \email{ukhlov@math.bgu.ac.il}


\begin{thebibliography}{99}

\bibitem{Ahl} 
L.~Ahlfors, Quasiconformal reflections, Acta Math., 109 (1963), 291--301.

\bibitem{Ahl66} 
L.~Ahlfors, Lectures on quasiconformal mappings, D. Van Nostrand Co., Inc., Toronto, Ont.-New York-London, 1966.

\bibitem{A99} 
M.~S.~Ashbaugh, Open problems on eigenvalues of the Laplacian. Analytic and geometric inequalities and applications, 
13--28, Math. Appl., 478, Kluwer Acad. Publ., Dordrecht, 1999. 

\bibitem{AB91} 
M.~S.~Ashbaugh, R.~D.~Benguria, Proof of the Payne-P\'olya-Weinberger conjeture, Bull. Amer. Math. Soc. 25 (1991), 19--29.

\bibitem{AB92} 
M.~S.~Ashbaugh, R.~D.~Benguria, A second proof of the Payne-P\'olya-Weinberger conjeture, Commun. Math. Phys. 147 (1992), 181--190.

\bibitem{AB01} 
M.~S.~Ashbaugh, R.~D.~Benguria, A sharp bound for the ratio of the first two Dirichlet
eigenvalues of a domain in a hemisphere of $S^n$. Trans. Amer. Math. Soc., 353 (2001), 1055--1087.

\bibitem{A06} 
M.~S.~Ashbaugh, The Fundamental Gap, unpublished write-up for the AIMS
meeting in May 2006, freely available online at http://www.aimath.org/WWN/
loweigenvalues/gap.pdf

\bibitem{A94} 
K.~Astala, Area distortion of quasiconformal mappings,
Acta Math. 173 (1994), 37--60.

\bibitem{Br}
J.~J~.A.~M.~Brands, Bounds for the ratios of the first three membrane eigenvalues, Arch.
Rat. Mech. Anal. 16 (1964), 265--258.

\bibitem{BGU15} 
V.~I.~Burenkov, V.~Gol'dshtein, A.~Ukhlov, Conformal spectral stability for the Dirichlet--Laplace operator,
 Math. Nachr. 288 (2015), 1822--1833.

\bibitem{Ch}
G.~Chiti, A bound for the ratio of the first two eigenvalues of a membrane, SIAM J.
Math. Anal. 14 (1983), 1163--1167.

\bibitem{Vr}
H.~L.~de Vries, On the upper bound for the ratio of the first two membrane eigenvalues,
Zeitschrift fur Naturforschung, 22 A (1967), 152--153.

\bibitem{F23}
G. Faber, Beweis, dass unter allen homogenen Membranen von gleicher Flache
und gleicher Spannung die kreisformige den tiefste, Grundton gibt, Sitzungsber,
Bayer. Akad. Wiss. Muchen, Math. Phys. Kl., (1923), 169--172.

\bibitem{G82}
F.~W.~Gehring, Characteristic Properties of Quasidisks, Les Presses de
l'Universit\'e de Montr\'eal, Montr\'eal (1982).

\bibitem{G81} 
V.~M.~Gol'dshtein, The degree of summability of generalized
derivatives of quasiconformal homeomorphisms, Siberian Math. J. 22(6) (1981), 821--836.

\bibitem{GG94} 
V.~Gol'dshtein, L.~Gurov, Applications of change of variables operators for exact embedding theorems, 
Integral Equ. Oper. Theory {19} (1994), 1--24.

\bibitem{GPU17_2} 
V.~Gol'dshtein, V.~Pchelintsev, A.~Ukhlov, 
Integral estimates of conformal derivatives and spectral properties of the Neumann-Laplacian, J. Math. Anal. Appl. 463 (2018), 19--39. 

\bibitem{GU09} 
V.~Gol'dshtein, A.~Ukhlov, Weighted Sobolev spaces and embedding theorems, 
Trans. Amer. Math. Soc. {361} (2009), 3829--3850.  

\bibitem{GU16} 
V.~Gol'dshtein, A.~Ukhlov, On the first Eigenvalues of Free Vibrating Membranes in Conformal Regular Domains, 
Arch. Rational Mech. Anal. 221 (2016), 893--915.

\bibitem{GN13}
D.~S.~Grebenkov, B.-T.~Nguyen, Geometrical Structure of Laplacian Eigenfunctions, SIAM Review, 55(4) (2013), 601--667.

\bibitem{Kh25}
E. Krahn, Uber eine von Rayleigh formulierte Minimaleigenschaft des Kreises,
Math. Ann. 94 (1925), 97-100.

\bibitem{Kr} 
S.~G.~Krantz, Geometric function theory. Explorations in complex analysis, 
(Birkh\"auser Boston, Inc., Boston, MA 2006).

\bibitem{M}
V.~Maz'ya, Sobolev spaces: with applications to elliptic
partial differential equations, Springer, Berlin/Heidelberg, 2010.


\bibitem{PPW55}
L.~E.~ Payne, G.~P\'olya, H.~F.~Weinberger, Sur le quotient de deux fr\'equences propres cons\'ecutives,
C. R. Acad. Sci. Paris 241 (1955), 917--919.                                        
                                                                                                                                                 
\bibitem{PPW56}
L.~E.~ Payne, G.~P\'olya, H.~F.~Weinberger, On the ratio of consecutive eigenvalues, J. Math. and Phys. 35 (1956), 289--298. 

\bibitem{PW} 
L.~E.~Payne and H. F. Weinberger, Some isoperimetric inequalities for membrane frequencies
and torsional rigidity, J. Math. Anal. Appl. 2 (1961), 210--216.

\bibitem{Roh} 
S.~Rohde, Quasicircles modulo bi-Lipschhitz maps, Rev. Mat. Iberoamericana 17 (2001), 643--659.

\bibitem{Tal76}
G.~Talenti, Best constants in Sobolev inequality,
Ann. Mat. Pura Appl. (IV) 110 (1976), 353--372. 

\bibitem{U93} 
 A.~Ukhlov, On mappings, which induce embeddings of
Sobolev spaces, Siberian Math. J. {34} (1993), 185--192.

\bibitem{VU02} 
S.~K.~Vodop'yanov, A.~D.~Ukhlov, Superposition operators in Sobolev spaces, 
Russian Mathematics: Izvestiya VUZ {46} (2002), 11--33.

 
\end{thebibliography}
\end{document}